\def\M{{\mathbb{M}}}
\def\N{{\mathbb{N}}}
\def\S{{\mathbb{S}}}
\theoremstyle{plain}
\newtheorem{thm}{Theorem}
\newtheorem{theorem}[thm]{Theorem}
\newtheorem{lemma}[thm]{Lemma}
\newtheorem{proposition}[thm]{Proposition}
\theoremstyle{remark}
\newtheorem{case}{Case}
\newtheorem{keyword}{keyword}
\newtheorem{conjecture}{Conjecture}
\theoremstyle{definition}
\begin{document}

\title[Local structures in polyhedral maps on surfaces, and path transferability of graphs]{Local structures in polyhedral maps on surfaces, and path transferability of graphs}

\author{Ryuzo Torii}

\address{Department of Mathematics, School of Education,
Waseda University, Nishi-waseda 1-6-1, Shin'juku-ku Tokyo 169-8050, Japan}

\email{torii@toki.waseda.jp}

\date{\today}

\maketitle

\allowdisplaybreaks

\begin{abstract}
We extend Jendrol' and Skupie\'n's results about the local structure of maps on the $2$-sphere: In this paper we show that if a polyhedral map $G$ on a surface $\M$ of Euler characteristic $\chi (\M) \le 0$ has more than $126|\chi (\M)|$ vertices, then $G$ has a vertex with "nearly" non-negative combinatorial curvature. As a corollary of this, we can deduce that path transferability of such graphs are at most $12$.
\end{abstract}
\begin{keyword}
Polyhedral maps, Embedding, Light vertex, Combinatorial curvature, Path transferability
\end{keyword}
\section{Introduction}
In this paper we use standard terminology and notation of graph theory. The graphs discussed here are finite, simple, undirected, and connected. A orientable surface $\S \sb{g}$ of genus $g$ is obtained from the sphere by adding $g$ handles. A non-orientable surface $\N \sb{q}$ of genus $q$ is obtained from the sphere by adding $q$ crosscaps. The Euler characteristic is defined by
\begin{eqnarray*}
\chi (\S \sb{g}) = 2-2g, \hspace{9mm}  \chi (\N \sb{q}) = 2-q.
\end{eqnarray*}

If a graph $G$ is embedded in a surface $\M$ then the connected components of $\M - G$ are called the {\it faces} of $G$.
If each face is an open disc then the embedding is called a {\it $2$-cell embedding}.
If $G$ is a $2$-cell embedding in a surface $\M$ and each vertex has degree at least three, then $G$ is called a {\it map} on $\M$. If in addition, $G$ is $3$-connected and the embedding has representativity at least three, then $G$ is called {\it polyhedral} map in $\M$ (see e.g. \cite{RoVi}). Let us recall that the {\it representativity} (or face-width) of a $2$-cell embedded graph $G$ in a surface $\M$ is equal to the smallest number $k$ such that $\M$ contains a non-contractible closed curve that intersects the graph $G$ in $k$ points.

The {\it facial walk} of a face $\alpha $ in a $2$-cell embedding is the shortest closed walk that follows the edges in order around the boundary of the face $\alpha $. The {\it degree} of a face $\alpha $ is the length of its facial walk. Vertices and faces of degree $i$ are called {\it $i$-vertices} and {\it $i$-faces}, respectively.
A vertex $v$ is said to be an {\it $(a\sb {1}, a\sb {2}, \ldots , a\sb {n})$-vertex} if the faces incident with $v$ have degree $a\sb {1}, a\sb {2}, \ldots , a\sb {n}$.
An edge $e$ is said to be an {\it $(i, j)$-edge} if two vertices incident with $e$ have degree $i, j$.

If each facial walk of a $2$-cell embedding consists of distinct vertices, then the embedding is called {\it closed $2$-cell embedding}. If $G$ is a closed $2$-cell embedding and the subgraph of $G$ bounding the faces incident with any vertex is a wheel with $\ge 3$ spokes and a possibly subdivided rim, the embedding is called a {\it wheel-neighborhood embedding}. The following Proposition is due to Negami and Vitray (see \cite{MoTh}, \cite{Ne}, \cite{RoVi}, \cite{Vi}).
\begin{proposition}
An embedding of a graph is a polyhedral map if and only if it is a wheel-neighborhood embedding.
\label{th:a}
\end{proposition}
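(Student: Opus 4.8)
The plan is to prove both implications by moving between the global definition of a polyhedral map ($2$-cell, minimum degree $\ge 3$, $3$-connected, representativity $\ge 3$) and the local wheel picture, the main tool being a ``separating-curve'' construction. The observation I would set up first is: if $\alpha,\beta$ are faces and $x\neq y$ are vertices lying on the boundary of each, then joining $x$ to $y$ by an arc through the interior of $\alpha$ and back by an arc through the interior of $\beta$ yields a simple closed curve $\gamma$ with $\gamma\cap G=\{x,y\}$ (the degenerate case $\alpha=\beta$ with $\alpha$ having two ``corners'' at a single vertex $v$ gives $\gamma\cap G=\{v\}$). If $\gamma$ is non-contractible this contradicts representativity $\ge 3$; if $\gamma$ is contractible it bounds a disc, and since $G$ is connected with $\gamma\cap G\subseteq\{x,y\}$, the edges at $x$ (and at $y$) are split by $\gamma$ into two nonempty groups according to which side of the disc they enter, so $G-\{x,y\}$ (resp.\ $G-v$) is disconnected, contradicting $3$-connectedness.

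For the forward direction I would first apply this to a face whose facial walk repeats a vertex, concluding that the embedding is closed $2$-cell. Then, fixing a vertex $v$ with rotation $e_1,\dots,e_d$ ($d=\deg v\ge 3$), second endpoints $u_i$, and incident faces $\alpha_i$ between $e_i$ and $e_{i+1}$: the closed-$2$-cell property forces the $\alpha_i$ to be distinct, each facial cycle $\partial\alpha_i$ to pass through $v$, and $R_i:=\partial\alpha_i-v$ to be a $u_i$--$u_{i+1}$ path. Applying the observation to pairs $\alpha_i,\alpha_j$ (with $x=v$) and to any occurrence of a $u_k$ interior to some $R_i$ then rules out every shared vertex except the forced ones, so that $R_1\cup\cdots\cup R_d$ is a cycle through $u_1,\dots,u_d$ in cyclic order and the union of $v$, the $e_i$ and the $R_i$ is a wheel with $d\ge 3$ spokes and a possibly subdivided rim; hence $G$ is a wheel-neighborhood embedding.

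For the converse the embedding is $2$-cell by hypothesis and every vertex has degree $\ge 3$ as the hub of a wheel. For $3$-connectedness I would use the rim $C_v$ of the wheel around $v$, a cycle lying in $G-v$ and meeting every neighbour of $v$: a cutvertex $v$ would have neighbours on both sides of the separation, so the connected subgraph $C_v\subseteq G-v$ could not exist; and for a $2$-cut $\{a,b\}$ with sides $A,B$, two-connectedness gives $a$ a neighbour in each of $A,B$ distinct from $b$, forcing $C_a$ to pass through $b$, whence $C_a-b$ is a connected subgraph of $G-\{a,b\}$ meeting both sides -- a contradiction. For representativity $\ge 3$ I would use the standard fact that representativity is realized by a non-contractible curve meeting $G$ only in vertices and crossing each face at most once (rerouting any transversal edge-crossing across that edge): if this number were $\le 2$, such a curve would lie in the union of the closures of at most two faces meeting its vertices, and since the closed-$2$-cell and wheel conditions force any two faces to meet in the empty set, a vertex, or an edge, that union is simply connected (indeed a disc when the faces share an edge), so the curve would be contractible -- contradiction.

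I expect the main obstacle to be the forward direction, and within it the bookkeeping in each invocation of the separating-curve observation: one must check that the contractible curve genuinely disconnects $G$ after deleting $x$ and $y$, i.e.\ that both sides of the bounding disc retain a vertex other than $x,y$. The way I would handle this is to always run the construction with $x=v$ the vertex being analysed and $y$ a second boundary vertex, exploiting that the faces around $v$ are already known to be distinct so that $y$ cannot coincide with the few ``legitimate'' common vertices.
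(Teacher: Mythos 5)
Note first that the paper contains no proof of Proposition \ref{th:a} at all: it is quoted as a known result of Negami and Vitray, with the reader referred to \cite{MoTh}, \cite{Ne}, \cite{RoVi}, \cite{Vi}. So the comparison is really with the standard proofs in those references, and your sketch is essentially that standard argument: a simple closed curve built from two arcs through the interiors of (at most) two faces, with the dichotomy ``non-contractible, contradicting representativity $\ge 3$'' versus ``contractible, hence bounding a disc and yielding a cut of size $\le 2$, contradicting $3$-connectedness'', used first to get the closed $2$-cell property and then to show the sector boundaries around a vertex assemble into a wheel; and, conversely, the rim cycle $C_v$ to kill $1$- and $2$-cuts and the normalized representativity-realizing curve lying in at most two face closures to get face-width $\ge 3$. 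I find the sketch sound, and you correctly isolate the one genuinely delicate point, namely that the contractible curve really produces a cut, i.e.\ that both sides of the bounding disc contain a vertex other than $x,y$; your intended fix does work, because with $x=v$ the two corners of the curve at $v$ split the rotation at $v$ into two nonempty groups of edges, and simplicity together with $\deg v\ge 3$ gives on each side a neighbour of $v$ different from $y$, except precisely in the legitimate configuration where $y=u_{i+1}$ is shared via the edge $vu_{i+1}$ of two consecutive sectors. Two smaller points you assert without argument but which do need (easy) justification if the sketch is expanded: in the converse, the claim that two distinct faces meet in $\emptyset$, one vertex or one edge follows because the facial cycles of two faces sharing a vertex both lie inside the wheel at that vertex, and two sector cycles of a wheel intersect only in the hub or in the hub together with a common spoke and its rim endpoint; and the normalization of the representativity-realizing curve so that it meets $G$ only in vertices (rerouting a transversal edge-crossing through an endvertex, which never increases the number of intersection points) should be stated as the standard lemma it is. With those two remarks your route buys a self-contained proof of exactly the statement the paper merely cites, at the cost of the case bookkeeping you already anticipate.
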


By Euler polyhedral formula, a simple planar graph has a vertex of degree $\le 5$. Local structures of planar graphs are further studied by Jendrol' and Skupie\'n\cite{JeSk}. Local structures of graphs on general surfaces is investigated by several researchers, see \cite{JeTuVo}, \cite{JeVo}, \cite{Ko}, \cite{Tu} etc. In this paper we will show the following by using the Discharging method that is slightly changed from the ones of \cite {JeSk}, \cite {JeTuVo}:

\begin{theorem}
Let $G$ be a simple polyhedral map on a surface $\M$ of Euler characteristic $\chi (\M) \le 0$. If $G$ has more than $126|\chi (\M)|$ vertices, then $G$ contains an $(a\sb {1}, a\sb {2}, \ldots , a\sb {n})$-vertex, where $n=3,4,5,6$ and $(a\sb {1}, a\sb {2}, \ldots , a\sb {n})$ satisfies one of the lists in Table \ref{tab:1}:
\end{theorem}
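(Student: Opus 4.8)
\medskip\noindent\textit{Sketch of the intended proof.}
The plan is a discharging argument in which the ``budget'' provided by $\chi(\M)\le 0$ is charged to the vertices at a fixed rate. By Proposition~\ref{th:a} every vertex $v$ has a wheel-neighborhood, so the faces incident with $v$ are exactly $\deg(v)$ distinct faces, each bounded by a (possibly subdivided) cycle; together with representativity $\ge 3$ this makes the link of $v$ behave like that of a vertex of a planar map. Starting from the standard initial charges $\mu(v)=\deg(v)-6$ on vertices and $\mu(f)=2\deg(f)-6$ on faces, Euler's formula gives $\sum_{v}\mu(v)+\sum_{f}\mu(f)=-6\chi(\M)=6|\chi(\M)|$; equivalently $\sum_v\bigl(-6\phi(v)\bigr)=6|\chi(\M)|$, where $\phi(v)=1-\tfrac{\deg(v)}{2}+\sum_{f\ni v}\tfrac1{\deg(f)}$ is the combinatorial curvature, and Table~\ref{tab:1} is, in effect, the list of face-degree sequences for which $\phi(v)$ is ``nearly non-negative'', i.e.\ at least $-\tfrac1{126}$.

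I would argue by contradiction: suppose $|V(G)|>126|\chi(\M)|$ but no vertex of $G$ has a type listed in Table~\ref{tab:1}. Then introduce discharging rules that redistribute charge without changing the total — each $k$-face sending out at most its full charge $2k-6$, apportioned so as to favour the low-degree vertices on its boundary, and, where a single large face cannot by itself cover a deficient vertex, a bounded amount being transferred between vertices through a common face. The aim is that after discharging (i) every face retains charge $\ge 0$, and (ii) every vertex ends with charge $\ge \tfrac1{21}$. Given (i)–(ii), summing the final charges $\mu^{\ast}$ yields $6|\chi(\M)|=\sum_v\mu^{\ast}(v)\ \ge\ \tfrac1{21}\,|V(G)|$, hence $|V(G)|\le 126|\chi(\M)|$, contradicting the hypothesis.

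Verifying (i) is routine: a $3$-face neither gains nor is asked to give, and a $k$-face with $k\ge 4$ starts with surplus $2k-6>0$ that the rules are designed never to overspend — here the wheel-neighborhood structure is used to ensure a face meets each incident vertex only once. The bulk of the work, and the main obstacle, is (ii), a case analysis on the face-degree sequence $(a_1,\dots,a_n)$ of a vertex $v$: for $\deg(v)=n\ge 7$ one already has $\mu(v)=n-6\ge 1\ge\tfrac1{21}$, while for $n\in\{3,4,5,6\}$ the hypothesis that $v$ is \emph{not} in Table~\ref{tab:1} means precisely that $\phi(v)<-\tfrac1{126}$, and one must show that the faces around $v$, plus whatever charge is funnelled through them, deliver the required $6-n+\tfrac1{21}$ to $v$. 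The delicate configurations are those lying just outside Table~\ref{tab:1} — $3$-vertices of type $(3,7,k)$, $(3,8,k)$, $(3,9,k)$, $(4,5,k)$, \dots\ with $k$ only slightly above the cut-off, and small-degree vertices incident with long chains of $3$-faces — for which the discharging amounts must be tuned so tightly that (i) and (ii) remain simultaneously satisfiable; keeping them compatible across all cases, using $3$-connectivity and representativity $\ge 3$ to constrain how $3$-faces can cluster around a vertex, is where the real effort lies. The remainder is bookkeeping with Euler's formula.
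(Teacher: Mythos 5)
Your setup is sound and, in outline, coincides with the paper's: argue by contradiction, redistribute charge so that every face keeps a nonnegative amount and every vertex ends with at least $\frac{1}{21}$, and then conclude $6|\chi(\M)|\ge\sum_v c^{**}(v)\ge\frac{1}{21}|V(G)|$, contradicting $|V(G)|>126|\chi(\M)|$. (Your swapped initial charges, $\deg(v)-6$ on vertices and $2\deg(f)-6$ on faces, are legitimate, since they give the same total $6|\chi(\M)|$ as the paper's choice $2\deg(v)-6$, $\deg(\alpha)-6$.) But what you have written is a plan, not a proof: the discharging rules are never specified, and the two claims (i) and (ii) on which everything rests are exactly the content of the theorem's proof and are explicitly deferred (``where the real effort lies''). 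In the paper this is the whole of Section~2: Rules \textbf{A1--A4} with carefully tuned constants (the threshold $2519$, transfer amounts $\frac{19}{10},1,\frac12,\frac14,\frac15,\frac1{10}$ along weak and semi-weak edges, the extra $\frac1{10}$ of Rule A2, the direct face-to-vertex payments of Rule A4), Rule \textbf{B}, and two lemmas whose case analyses verify precisely your (i) and (ii), with the quantity $\frac{1}{21}$ emerging from the worst cases $d=7$ ($\frac25\cdot\frac17$) and $d\ge 13$. Without exhibiting rules and carrying out that verification, the argument does not establish the statement; indeed your own discussion of the ``delicate configurations'' shows the feasibility of (i) and (ii) simultaneously is exactly what is in doubt.

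A second, more substantive error in how you frame the case analysis: it is not true that a vertex fails to be in Table~1 precisely when $\Phi(v)<-\frac{1}{126}$. The table is not a curvature-threshold list. For instance $(3,12,2518)$ is in the table although $\Phi=-\frac12+\frac13+\frac1{12}+\frac1{2518}\approx-0.083<-\frac1{126}$, and the paper itself points out in Section~4 that curvature considerations would suggest much smaller cut-offs (e.g.\ $(3,12,12)$, $(3,3,4,\le 12)$); the bound $2518$ comes from the discharging constants (the $\ge 2519$ rule and the $\frac1{21}$ rate), not from a curvature inequality. So organizing the unavoidable-configuration analysis around ``$v$ not in the table $\Leftrightarrow$ $\Phi(v)<-\frac1{126}$'' misidentifies what has to be proved, and a proof built on that equivalence would be attacking a different (and in one direction false) statement.
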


\begin{table}
\label{tab:1}
\begin{tabular}{lllll}
\hline\noalign{\smallskip}
(1) In the case $n=3$ 	 		 & & (2) In the case $n=4$ 			 & &   \\
$(3,3,\forall )^{\dagger}$		 & & $(3,3,3,\forall )^{\dagger}$ 	 & &   \\
$(3,4,\forall )^{\dagger}$		 & & $(3,3,4,\le 2518 )$ 			 & &   \\
$(3,5,\forall )^{\dagger}$		 & & $(3,3,5,\le 2518 )$ 			 & &   \\
$(3,6,\le 2518 )$				 & & $(3,3,6,6)^{\dagger}$ 			 & &   \\
$(3,7,\le 2518 )$ 				 & & $(3,4,4,4)^{\dagger}$ 			 & &   \\
$(3,8,\le 2518 )$ 				 & & $(3,4,4,5)^{\dagger}$ 			 & &   \\
$(3,9,\le 2518 )$			 	 & & $(3,4,4,6)^{\dagger}$ 			 & &   \\
$(3,10,\le 2518 )$		 		 & & $(4,4,4,4)^{\dagger}$ 			 & &   \\
$(3,11,\le 2518 )$ 				 & & 					 			 & &   \\
$(3,12,\le 2518 )$ 				 & & (3) In the case $n=5$			 & &   \\
$(4,4,\forall )^{\dagger}$ 		 & & $(3,3,3,3,3)^{\dagger}$ 		 & &   \\
$(4,5,\le 2518 )$ 				 & & $(3,3,3,3,4)^{\dagger}$ 		 & &   \\
$(4,6,\le 2518 )$ 				 & & $(3,3,3,3,5)^{\dagger}$ 		 & &   \\
$(4,7,\le 2518 )$ 				 & & $(3,3,3,3,6)^{\dagger}$ 		 & &   \\
$(4,8,\le 2518 )$ 				 & & $(3,3,3,4,4)^{\dagger}$ 		 & &   \\
$(5,5,\le 2518 )$ 				 & & 								 & &   \\
$(5,6,\le 2518 )$ 				 & & (4) In the case $n=6$			 & &   \\
$(6,6,6 )^{\dagger}$ 			 & & $(3,3,3,3,3,3)^{\dagger}$		 & &   \\
\noalign{\smallskip}\hline 
\noalign{\smallskip}
\noalign{\smallskip}
\end{tabular}
\caption{Local structure in polyhedral map with sufficient large vertices}
\end{table}

Such $i$-vertices, $i=3,4,5,6$, are called {\it light} vertices in $G$. After Section $4$, these vertices are also called the ones with nearly non-negative curvature.\\

On the other hand, path transferability is introduced in \cite{To1}: We consider a path as an ordered sequence of distinct vertices with a head and a tail. Given a path, a {\it transfer-move} is to remove the tail and add a vertex at the head. A graph is {\it $n$-path-transferable} if any path with length $n$ can be transformed into any other such path by a sequence of transfer-moves. The maximum number $n$ for which $G$ is $n$-path-transferable is called the {\it path transferability} of $G$. The author in \cite{To2} showed the following result for planar graphs.
\begin{theorem}[\cite{To2}]
Path transferability of a simple planar graph with minimum degree $\ge 3$ is at most $10$.
\end{theorem}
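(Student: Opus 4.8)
The plan is to show the contrapositive: for every simple planar graph $G$ with $\delta(G)\ge 3$ that carries a path of length $11$, there are two length-$11$ paths $P,Q$ such that no sequence of transfer-moves carries $P$ to $Q$, and a padded version of the same construction works for every length exceeding $11$, so the path transferability of $G$ is at most $10$. (If $G$ is too small to carry a path of length $11$, the bound follows from the convention of \cite{To1}; in any case, for small graphs a path of near-maximum length occupies almost every vertex, so few or no transfer-moves are available and the transfer-graph on such paths is disconnected, as one checks already for $K_4$ and the octahedron.) The cleanest $P$ to aim for is a \emph{dead} path, one admitting no valid transfer-move whatsoever: if the head $v$ of $P$ has all of its neighbours among the interior vertices of $P$ --- in particular, if the tail of $P$ is not adjacent to $v$ --- then after the tail is deleted no fresh vertex can be appended at $v$, the orbit of $P$ under transfer-moves is $\{P\}$, and hence $P\not\to Q$ for every $Q\ne P$.

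To manufacture such a $P$ one uses the local-structure input in its planar form, namely the Jendrol'--Skupie\'n light-vertex theorem \cite{JeSk} that the present paper extends: it supplies a vertex $v$ whose degree and incident face sizes realise one of finitely many small patterns, and by Proposition \ref{th:a} the faces around $v$ bound a wheel-neighbourhood, so the induced graph $G[N(v)]$ contains a (possibly subdivided) rim whose size is controlled. One then routes a path of length exactly $11$ ending at $v$ that runs through every neighbour of $v$ as an interior vertex: when $G[N(v)]$ already carries a short path through $N(v)$ few filler vertices are needed, and when the neighbours of $v$ are mutually non-adjacent one interleaves fillers. The point is that the $12$ vertices of a length-$11$ path always suffice to absorb $N(v)$ for the vertex the light-vertex theorem hands us, as long as $G$ is large (so the needed fresh vertices, and a tail not adjacent to $v$, exist); longer lengths are handled by inserting more fillers, and for any configuration in which no dead path of length $11$ can be built one falls back on a winding-type invariant in the spirit of the cycle, where transfer-moves preserve the sense in which the path wraps a small face, so a path and its reversal are mutually unreachable.

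The main obstacle is the bookkeeping, in two respects. First, the routing must be carried out uniformly over the whole light-vertex list, which forces attention to low-connectivity situations: a light vertex may be incident to a $2$- or $3$-cut, in which case a path ending at $v$ cannot freely visit all of $N(v)$, and one must either select a better light vertex or argue that the cut itself yields the obstruction. Second, one must pin the threshold at exactly $11$ --- that every large planar graph of minimum degree $3$ admits the obstruction at length $11$, while some graph admits none at length $10$ --- and this is the delicate computation performed in \cite{To2}. Applying the same scheme to the longer list above, in which light vertices may have degree up to $6$, raises the threshold by two and gives the bound $12$ for polyhedral maps on surfaces with $\chi(\M)\le 0$.
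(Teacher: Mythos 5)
First, a point of comparison: the paper does not actually prove this statement --- it is quoted from \cite{To2} --- so the only in-paper argument of this kind is the proof of the Main Theorem in Section 3, and your outline does match that strategy: take a light vertex $v$ supplied by a local-structure theorem and exhibit a ``clogged'' path (all neighbours of the head interior to the path), of length one more than the claimed bound, so that it admits no transfer-move. In that sense your plan is the intended one. But as a proof it has genuine gaps, the central one being the bookkeeping you defer. The threshold is not governed by $|N(v)|$ fitting among the $10$ interior vertices of an $11$-path: the path must use edges of $G$, and the only edges guaranteed near $v$ are those on the boundary walks of its incident faces, so the length of the clogged path is dictated by the \emph{face degrees} in the light type, not by $\deg(v)$. (Compare Fig.~\ref{fig:1}: a $(3,12,\le 2518)$-vertex yields a clogged path of length $1+1+10+1=13$ by walking around the $12$-face; the planar bound $10$ comes from the worst entry of the planar light-vertex list of \cite{JeSk}, e.g.\ a vertex of type $(3,10,\forall)$ giving length exactly $11$.) You never run this computation over the planar list, and ``interleaving fillers'' between non-adjacent neighbours of $v$ is exactly what you cannot do unless those vertices and edges exist, which is what the face-degree bounds provide. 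Relatedly, your final sentence gets the surface case backwards: the bound $12$ comes from the large faces in types such as $(3,12,\le 2518)$ in Table~\ref{tab:1}, not from light vertices of degree up to $6$.

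Three further gaps. (i) Proposition~\ref{th:a} requires polyhedrality; a simple planar graph of minimum degree $3$ need not be $3$-connected, and your treatment of $2$- and $3$-cuts (``select a better light vertex or argue that the cut itself yields the obstruction'') is a placeholder, not an argument. Note also that, unlike Theorem~2, the planar statement carries \emph{no} largeness hypothesis, so your repeated appeals to ``as long as $G$ is large'' and the aside about $K_4$ and the octahedron do not cover the small graphs the theorem must include; the existence of a tail vertex off the path and not adjacent to the head has to be argued, not assumed. (ii) The fallback ``winding-type invariant'' is unsupported: transfer-moves in a general planar graph preserve no such quantity, since a path can leave the vicinity of the small face and return with the opposite orientation; if some configuration admits no clogged path of the right length, you currently have no obstruction at all. (iii) Since path transferability is the \emph{maximum} $n$ for which $G$ is $n$-transferable, you must defeat every $n\ge 11$, and ``padding'' requires showing the clogged path extends backwards to each length $n$ for which $G$ has an $n$-path (or invoking a monotonicity result from \cite{To1}); this too is asserted rather than proved. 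The skeleton is right, but these are precisely the points where the delicate work of \cite{To2} lies.
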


For graphs on general surfaces, we will show the following result.\\
\\
\\
\textbf{Main Theorem.}
If a polyhedral map $G$ on a surface $\M$ of Euler characteristic $\chi (\M) \le 0$ has more than $126|\chi (\M)|$ vertices, then path transferability of $G$ is at most $12$.

\section{Proof of Theorem $2$}
Let $G$ be a counterexample with $n$ vertices. We consider only $2$-cell embeddings of graphs. Hence Euler's formula implies:
\begin{eqnarray*}
\sum_{v\in V(G)}^{}(2\text{deg}(v)-6) + \sum_{\alpha \in F(G)}^{}(\text{deg}(\alpha )-6) = 6|\chi (\M)|
\end{eqnarray*}
because $\chi (\M) \le 0$.

In the following we use the Discharging method. We assign to each vertex $v$ the charge $c(v)=2$deg$(v)-6$ and to each face $\alpha $ the charge $c(\alpha )=$ deg$(\alpha )-6$. These charges of the vertices and faces will be locally redistributed to charges $c\sp {*}(v)$ and $c\sp {*}(\alpha )$, respectively, by the following rules. We first apply Rule \textbf{A$1-4$}, and next apply Rule \textbf{B}. An edge $e$ is called {\it weak} or {\it semi-weak} if two or exactly one of its endvertices are of degree $3$, respectively. A face of $G$ is called {\it minor} if its degree is at most $5$, and is called {\it major} if its degree is at least $7$.\\
\\
\textbf{Rule A$1$}.\\
Suppose that $\alpha $ is a face of $G$ incident with a vertex $v$, and that deg$(\alpha ) \le 5 $, deg$(v) \ge 4$. Then $v$ sends to $\alpha $ the following charge:

$1$				if deg $(\alpha ) = 3$,

$\frac{1}{2}$	if deg $(\alpha ) = 4$,

$\frac{1}{5}$	if deg $(\alpha ) = 5$.
\\
\textbf{Rule A$2$}.\\
Every $k$-vertex, $k\ge 4$, which has at least one $3$-face and at least two $6$-faces sends additional charge $\frac{1}{10}$ to each $3$-face after Rule A$1$.
\\
\textbf{Rule A$3$}.\\
Suppose that $e$ is a common edge of adjacent faces $\alpha $ and $\alpha \sp {\prime }$ of $G$, and that deg$(\alpha ) \le 5 $, deg$(\alpha \sp {\prime }) \ge 7$. If $e$ is an weak edge, then $\alpha \sp {\prime }$ sends to $\alpha $ the following charge:\\

\begin{tabular}{rl||c|c|c|}
		 &	  		&deg$(\alpha )=3$   & $4$ 	& $5$	\\
\hline\noalign{\smallskip}
deg$(\alpha \sp {\prime })$& $=7,8$& $\frac{1}{5}$ & $\frac{1}{5}$ & $\frac{1}{5}$ \\
				 &$=9,10,11,12$ & $\frac{1}{2}$	& $\frac{1}{2}$	& $\frac{1}{5}$ \\
				 &$=13,\ldots ,2518$	   & $1$    & $\frac{1}{2}$	& $\frac{1}{5}$ \\
				 &$\ge 2519$	   & $\frac{19}{10}$    & $1$	& $\frac{2}{5}$ \\
\noalign{\smallskip}\hline 
\end{tabular}
\\
\\
If $e$ is a semi-weak edge, then $\alpha \sp {\prime }$ sends to $\alpha $ the following charge.\\

\begin{tabular}{rl||c|c|c|}
		 &	  		&deg$(\alpha )=3$   & $4$ 	& $5$	\\
\hline\noalign{\smallskip}
deg$(\alpha \sp {\prime })$& $=7,8$& $\frac{1}{10}$ & $\frac{1}{10}$ & $\frac{1}{10}$ \\
				 &$=9,10,11,12$ & $\frac{1}{4}$	& $\frac{1}{4}$	& $\frac{1}{10}$ \\
				 &$=13,\ldots ,2518$	   & $\frac{1}{2}$    & $\frac{1}{4}$	& $\frac{1}{10}$ \\
				 &$\ge 2519$	   & $1$    & $\frac{1}{2}$	& $\frac{1}{5}$ \\
\noalign{\smallskip}\hline 
\end{tabular}
\\
\\
\textbf{Rule A$4$}.\\
Every $k$-face, $k\ge 2519$, supplies the charge $\frac{1}{2}$, $\frac{1}{5}$  to each incident $(3,3,4,k)$-, $(3,3,5,k)$-vertices, respectively.\\

\begin{lemma}
After applying Rule \textbf{A$1-4$} to each vertex $v$ and each face $\alpha $, the new charge $c\sp {*}(v)$ and $c\sp {*}(\alpha )$ are 
\begin{eqnarray*}
c\sp {*}(v) \ge 0,
\end{eqnarray*}
\begin{eqnarray*}
c\sp {*}(\alpha ) \ge 
\left\{ 
\begin{array}{llll}
0 & \text{ if } & d=3,4,5,6,\\
\frac{2}{5}, \frac{6}{5}, 1, \frac{3}{2}, \frac{5}{2}, 3 & \text{ if } & d=7,8,9,10,11,12,\\
\frac{1}{21}d & \text{ if } & d\ge 13,
\end{array}
\right. 
\end{eqnarray*}
here $d= {\rm deg} (\alpha )$.
\label{th:b}
\end{lemma}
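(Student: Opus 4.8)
The plan is to verify the claimed lower bounds on $c^{*}(v)$ and $c^{*}(\alpha)$ by a case analysis on the degree of the vertex (resp. face), tracking exactly which of Rules \textbf{A$1$--$4$} can fire at that element and bounding the total charge sent out. Throughout I will use Proposition~\ref{th:a} (the wheel-neighborhood description of polyhedral maps): around any vertex $v$ the incident faces form a subdivided wheel-rim, so in particular two faces sharing two edges at $v$ is impossible, consecutive minor faces are genuinely distinct, and the representativity-$\ge 3$ condition rules out the small degenerate configurations. This is what lets me count incidences cleanly.

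First I would handle the vertices. A $3$-vertex sends out nothing under any rule (Rules \textbf{A$1$,A$2$} require degree $\ge 4$; Rules \textbf{A$3$,A$4$} move charge between faces or from faces to vertices), and it has $c(v)=0$, so $c^{*}(v)=0$ trivially, and in fact $3$-vertices only \emph{receive} under Rule~\textbf{A$4$}. For a $k$-vertex with $k\ge 4$, the charge is $c(v)=2k-6$, and it sends out via Rule~\textbf{A$1$} at most $1$ per incident $3$-face, $\tfrac12$ per incident $4$-face, $\tfrac15$ per incident $5$-face, plus via Rule~\textbf{A$2$} an extra $\tfrac1{10}$ per incident $3$-face but only when it also has $\ge 2$ incident $6$-faces. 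The key point is that the faces around $v$ alternate in a cyclic pattern of length $k$, so a $3$-face is always flanked by two other faces at $v$; bounding the worst mix of incident face-degrees subject to $k$ faces around $v$ gives the claim. The tight cases are exactly the small-degree light vertices, e.g. a $(3,3,3,\ldots)$- or $(3,3,4,\ldots)$-vertex: here $v$ may additionally be replenished by Rule~\textbf{A$4$} (the $\tfrac12$ or $\tfrac15$ from the huge face), which is precisely why Rule~\textbf{A$4$} was introduced, and one checks $c^{*}(v)\ge 0$ in each such case. I would organize this as subcases $k=4$, $k=5$, $k=6$, $k\ge 7$, since for $k\ge 7$ the surplus $2k-6$ easily dominates.

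Next I would handle the faces. A minor face $\alpha$ ($d\le 5$) has $c(\alpha)=d-6<0$ and only receives charge: from incident $\ge 4$-vertices (Rule~\textbf{A$1$}), from incident $\ge 4$-vertices with a neighboring $3$-face and $\ge 2$ $6$-faces (Rule~\textbf{A$2$}), and from adjacent major faces across weak or semi-weak edges (Rule~\textbf{A$3$}). Here I must show $\alpha$ receives \emph{enough}, i.e. at least $6-d$; this is where the counterexample hypothesis enters — since $G$ is a counterexample, no light vertex exists, so every vertex incident to $\alpha$ of small degree is forced into configurations where its neighboring faces are large, triggering Rule~\textbf{A$3$} transfers. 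For a $6$-face, $c(\alpha)=0$ and it neither sends nor receives, so $c^{*}(\alpha)=0$. For a major face $\alpha$ ($d\ge 7$), $c(\alpha)=d-6>0$ and $\alpha$ only sends charge, via Rules~\textbf{A$3$} and \textbf{A$4$}; I would bound the number of incident minor faces and incident $(3,3,4,d)$/$(3,3,5,d)$-vertices by $d$ (again using the wheel-neighborhood structure to see these sit around the boundary walk of $\alpha$ without excessive overlap) and multiply by the per-item amounts in the Rule~\textbf{A$3$} tables, checking the residual meets $\tfrac2{21}d$ type bounds — the piecewise values $\tfrac25,\tfrac65,1,\tfrac32,\tfrac52,3$ for $d=7,\dots,12$ and $\tfrac1{21}d$ for $d\ge 13$ are exactly what survives in the worst case.

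The main obstacle is the minor-face case, specifically $3$-faces adjacent to many large faces and incident to two low-degree vertices: one must show that across each weak/semi-weak edge the adjacent major face is forced to be large enough (degree in the ranges $9$--$12$, $13$--$2518$, or $\ge 2519$) that its Rule~\textbf{A$3$} contribution closes the deficit, and this forcing is exactly the assertion that no entry of Table~\ref{tab:1} occurs. In other words, the deficit-accounting for minor faces is where the threshold $2518$ and the whole shape of Table~\ref{tab:1} get pinned down, and it requires carefully enumerating the possible degree sequences of the two or three vertices of a minor face in a counterexample. I expect the $3$-face analysis (with its subcases by the degrees of its three vertices and by whether Rule~\textbf{A$2$} fires) to be the longest and most delicate part; the $4$-face and $5$-face analyses are similar but lighter, and the major-face and vertex bounds are comparatively routine counting.
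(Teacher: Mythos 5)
Your outline follows the same skeleton as the paper's proof (case analysis on vertex degree $k=3,4,5,6,\ge 7$ and on face degree, using the counterexample hypothesis to force large neighbours of minor faces), but as written it has a step that would fail and it leaves the substance of the lemma unexecuted. The concrete failure is in your treatment of major faces: you propose to bound the number of incident minor faces and of $(3,3,4,d)$-/$(3,3,5,d)$-vertices by $d$ and multiply by the per-item amounts of \textbf{Rule A$3$}/\textbf{A$4$}. This does not give the claimed residuals: already for $d=7$ one gets $1-\tfrac{7}{5}<0$ instead of $\tfrac{2}{5}$, and for $d\ge 2519$ one gets $(d-6)-\tfrac{19}{10}d<0$ instead of $\tfrac{1}{21}d$. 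The missing idea, which carries the whole major-face case in the paper, is a spacing argument along the boundary of $\alpha$: because no light vertex of Table \ref{tab:1} exists, there are no two consecutive weak edges of the relevant kind (and no three consecutive semi-weak ones, no consecutive weak $(4,k)$-edges), so high-value transfers occur on at most roughly every other boundary edge. This caps the outflow at, e.g., $3\cdot\tfrac15$ for $d=7$, $\tfrac12\lfloor d/2\rfloor$ for $9\le d\le 12$, $\lfloor d/2\rfloor$ for $13\le d\le 2518$, and $\tfrac{19}{10}\cdot\tfrac d2$ (with a separate odd-$d$ case absorbing the \textbf{Rule A$4$} payments) for $d\ge 2519$, and it is exactly these counts that produce the values $\tfrac25,\tfrac65,1,\tfrac32,\tfrac52,3$ and $\tfrac1{21}d$. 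Vague appeal to the wheel-neighborhood structure does not substitute for this, since the spacing comes from the absence of light vertices, not from polyhedrality alone.

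Beyond that, the part you yourself identify as the crux --- showing that each minor face receives at least $6-d$ --- is only described, not carried out; the paper's proof is essentially a long enumeration (e.g.\ for a $3$-face with three $3$-vertices: either all three adjacent faces have degree $\ge 13$, or two have degree $\ge 2519$, giving at least $\min\{3,\ \tfrac{19}{10}\cdot 2\}=3$), and none of these verifications, nor the arithmetic pinning down $2518$ and $\tfrac1{21}$ (e.g.\ $(k-6)-\lfloor k/2\rfloor\ge \tfrac k{21}$ for $13\le k\le 2518$), appears in your proposal. One smaller inaccuracy: $3$-vertices do not receive anything under \textbf{Rule A$4$}; its recipients are the $(3,3,4,k)$- and $(3,3,5,k)$-vertices, which have degree $4$ (this is harmless for $c^{*}(v)\ge 0$ at $3$-vertices, but it matters for the tight $k=4$ cases, where that refund is exactly what restores nonnegativity). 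As it stands the proposal is a correct plan of attack minus its decisive counting ingredient, not a proof of the lemma.
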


\begin{proof}
We consider several cases.
\begin{case} Let $v$ be a $k$-vertex, $k \ge 3$. Then $c(v)=2k-6$.\\

{\it 1-1}: $k=3$. Then $c\sp {*}(v) = c(v) =0$.\\

{\it 1-2}: $k=4$. This $4$-vertex $v$ corresponds to one of the following types: $(3,3,4, \ge 2519)$, $(3,3,5, \ge 2519)$, $(3,3,\ge 6, \ge 7)$, $(3,4,4, \ge 7)$, $(3,\ge 4, \ge 5, \ge 5)$, $(\ge 4,\ge 4,\ge 4,\ge 5)$ since $G$ is a counterexample. For the first type $(3,3,4, \ge 2519)$, $c\sp {*}(v) = c(v)-( 1\times 2 + \frac{1}{2}) + \frac{1}{2}  =0$ by \textbf{Rule A$4$}. For the second type $c\sp {*}(v)=0$ similarly. If this vertex has the type $(3,6,6,i)$, $i\ge 4$, then $c\sp {*}(v) \ge c(v)- (1+\frac{1}{2} + \frac{1}{10}) = \frac{2}{5} \ge 0$ by \textbf{Rule A$2$}. For the other cases we can deduce that $c\sp {*}(v) \ge 0$.\\

{\it 1-3}: $k=5$. Then $v$ corresponds to one of the following types: $(3,3,3,3, \ge 7)$, $(3,3,3,\ge 4,\ge 5)$, $(\ge 3,\ge 3,\ge 4,\ge 4,\ge 4)$. For the first type $c\sp {*}(v) = c(v)-1\times 4  =0$, and for the second type $c\sp {*}(v) \ge c(v) -(1+1+1+\frac{1}{2}+ \frac{1}{5} ) =\frac{3}{10} \ge 0$. For the third type $c\sp {*}(v) = c(v)-(1+1+\frac{1}{2}+\frac{1}{2}+\frac{1}{2} ) =\frac{1}{2} \ge 0$.

{\it 1-4}: $k=6$. Then $v$ has the type $(\ge 3,\ge 3,\ge 3,\ge 3,\ge 3,\ge 4)$, and $c\sp {*}(v) \ge c(v)- (1\times 5 + \frac{1}{2}) = \frac{1}{2}  \ge 0$.\\

{\it 1-5}: $k\ge 7$. The charge transferred from $v$ is maximized when all incident faces of $v$ are of degree $3$ or when all incident faces except two $6$-faces are of degree $3$. Therefore $c\sp {*}(v) \ge c(v)- \max \{ 1\times k;  (1+\frac{1}{10} )\times (k-2) \} = \min \{ k-6;  \frac{1}{10}(9k-38) \} \ge 0$.
\end{case}

\begin{case} Let $\alpha $ be a $k$-face, $k \ge 3$. Then $c(\alpha )=k-6$.

{\it 2-1}: $k=3$. Let $x\sb {1}, x\sb {2}, x\sb {3}$ be the three vertices of $\alpha $, and $\beta \sb {1}, \beta \sb {2}, \beta \sb {3}$ the three adjacent faces such that they have $x\sb {1}x\sb {2}$, $x\sb {2}x\sb {3}$, $x\sb {3}x\sb {1}$ in common with $\alpha $, respectively. 

[a] We first assume that all of $x\sb {1}, x\sb {2}, x\sb {3}$ are of degree $3$. One of the followings holds; [1] all of $\beta \sb {1}, \beta \sb {2}, \beta \sb {3}$ are of degree $\ge 13$; or [2] two of $\beta \sb {1}, \beta \sb {2}, \beta \sb {3}$ are of degree $\ge 2519$. Anyway $c\sp {*}(\alpha ) \ge c(\alpha ) + \min \{ 1\times 3 ; \frac{19}{10}\times 2  \}  \ge 0$.

[b] We next assume that exactly one of $x\sb {1}, x\sb {2}, x\sb {3}$ has degree $\ge 4$. Without loss of generality, let deg$(x\sb {1} )\ge 4$ and deg$(x\sb {2} )=$ deg$(x\sb {3} )= 3$. We further assume that deg$(\beta \sb {1} )=6$. Then deg$(\beta \sb {2} )\ge 2519$ because $x\sb {2}$ is not $(3,6, \le 2518)$-vertex. If deg$(\beta \sb {3} )=6$, then $c\sp {*}(\alpha ) = c(\alpha ) +( 1 +\frac{1}{10}) + \frac{19}{10}=0$ by \textbf{Rule A$2$}, therefore we set deg$(\beta \sb {3} )\ge 7$. The face $\beta \sb {3}$ sends to $\alpha $ the charge $\ge \frac{1}{10}$, hence $c\sp {*}(\alpha ) \ge c(\alpha ) +1 + ( \frac{19}{10} +\frac{1}{10}) =0$. We thus assume that deg$(\beta \sb {1} )\ge 7$. We similarly deduce that deg$(\beta \sb {3} )\ge 7$ by its symmetry. If one of $\beta \sb {1}, \beta \sb {3}$ has degree $\le 12$, then deg$(\beta \sb {2} )\ge 2519$, and then $c\sp {*}(\alpha ) \ge c(\alpha ) + 1 + ( \frac{1}{10} \times 2 + \frac{19}{10} ) \ge 0$. Therefore both $\beta \sb {1}$ and $\beta \sb {3}$ are of degree $\ge 13$. If $\beta \sb {2}$ has degree $\le 12$, then $\beta \sb {1}$ and $\beta \sb {3}$ are of degree $\ge 2519$, and then $c\sp {*}(\alpha ) \ge c(\alpha ) + 1 + 1\times 2 = 0$. Hence $\beta \sb {2}$ has degree $\ge 13$, and then $c\sp {*}(\alpha ) \ge c(\alpha ) + 1 + ( \frac{1}{2} \times 2 +1 ) = 0$. 

[c] We assume that exactly two of $x\sb {1}, x\sb {2}, x\sb {3}$ have degree $\ge 4$. Let deg$(x\sb {1} )\ge 4$, deg$(x\sb {2} )\ge 4$, and deg$(x\sb {3} )=3$. If one of the face $\beta \sb {2}$, $\beta \sb {3}$ has degree $\le 12$, then the other face has degree $\ge 2519$, and then $c\sp {*}(\alpha ) \ge c(\alpha ) + 1 \times 2 + 1 = 0$. Therefore both $\beta \sb {2}$ and $\beta \sb {3}$ are of degree $\ge 13$, and $c\sp {*}(\alpha ) \ge c(\alpha ) + 1 \times 2 + (\frac{1}{2} \times 2 ) = 0$.

[d] We finally assume that $x\sb {1}, x\sb {2}, x\sb {3}$ have degree $\ge 4$. Then $c\sp {*}(\alpha ) \ge c(\alpha ) + 1 \times 3 = 0$.\\

{\it 2-2}: $k=4$. Let $x\sb {1}, x\sb {2}, x\sb {3}, x\sb {4}$ be the four vertices of $\alpha $ in a natural circular ordering, and $\beta \sb {1}, \beta \sb {2}, \beta \sb {3},\beta \sb {4}$ the adjacent faces such that they have $x\sb {1}x\sb {2}$, $x\sb {2}x\sb {3}$, $x\sb {3}x\sb {4}$, $x\sb {4}x\sb {1}$ in common with $\alpha $, respectively.

[a] We assume that all of $x\sb {1}, x\sb {2}, x\sb {3}, x\sb {4}$ are of degree $3$. Then [1] all of $\beta \sb {1}, \beta \sb {2}, \beta \sb {3}, \beta \sb {4}$ are of degree $\ge 9$; or [2] at least two of the four faces are of degree $\ge 2519$. Therefore $c\sp {*}(\alpha ) \ge c(\alpha ) + \min \{ 1\times 2 ; \frac{1}{2} \times 4 \} = 0$.

[b] We next assume that exactly one of $x\sb {1}, x\sb {2}, x\sb {3}, x\sb {4}$ are of degree $\ge 4$. Let deg$(x\sb {1} )\ge 4$ and deg$(x\sb {2} )=$ deg$(x\sb {3} )=$ deg$(x\sb {4} )= 3$. If deg$(\beta \sb {2} )\le 8$, then $\beta \sb {1} $ and $\beta \sb {3} $ are of degree $\ge 2519$, and then $c\sp {*}(\alpha ) \ge c(\alpha ) + \frac{1}{2} + (\frac{1}{2} + 1 )= 0$. Therefore deg$(\beta \sb {2} )\ge 9$, and similarly deg$(\beta \sb {3} )\ge 9$ by its symmetry. If deg$(\beta \sb {1} )\le 8$, then $\beta \sb {2} $ has degree $\ge 2519$, and then $c\sp {*}(\alpha ) \ge c(\alpha ) + \frac{1}{2} + (1+ \frac{1}{2}  )= 0$. We thus conclude that deg$(\beta \sb {1} )\ge 9$, and that deg$(\beta \sb {4} )\ge 9$ in the same way. Hence all of $\beta \sb {1}, \beta \sb {2}, \beta \sb {3},\beta \sb {4}$ have degree $\ge 9$, and $c\sp {*}(\alpha ) \ge c(\alpha ) + \frac{1}{2} + (\frac{1}{2}\times 2  + \frac{1}{4}\times 2 )= 0$.

[c] We assume that consecutive two vertices of $x\sb {1}, x\sb {2}, x\sb {3}, x\sb {4}$ are of degree $\ge 4$. Let deg$(x\sb {1} )\ge 4$, deg$(x\sb {2} )\ge 4$, and deg$(x\sb {3} )=$ deg$(x\sb {4} )= 3$. If deg$(\beta \sb {3} )\le 8$, then $\beta \sb {2} $ and $\beta \sb {4} $ are of degree $\ge 2519$, and then $c\sp {*}(\alpha ) \ge c(\alpha ) + ( \frac{1}{2} \times 2 ) + (\frac{1}{2} \times 2)= 0$. Therefore deg$(\beta \sb {3} )\ge 9$. If deg$(\beta \sb {2} )\le 8$, then $\beta \sb {3} $ has degree $\ge 2519$, and then $c\sp {*}(\alpha ) \ge c(\alpha ) + ( \frac{1}{2} \times 2 ) + 1= 0$. Hence deg$(\beta \sb {2})\ge 9$, and similarly deg$(\beta \sb {4})\ge 9$. Then $c\sp {*}(\alpha ) \ge c(\alpha ) + ( \frac{1}{2} \times 2) + (\frac{1}{2}  + \frac{1}{4}\times 2 )= 0$.

[d] We assume that opposite two vertices of $x\sb {1}, x\sb {2}, x\sb {3}, x\sb {4}$ are of degree $\ge 4$. Let deg$(x\sb {1} )\ge 4$, deg$(x\sb {3} )\ge 4$, and deg$(x\sb {2} )=$ deg$(x\sb {4} )= 3$. Then one of the following holds; [1]both of $\beta \sb {1}, \beta \sb {2}$ have degree $\ge 9$; [2]one of $\beta \sb {1}, \beta \sb {2}$ has degree $\ge 2519$. Therefore the sum of the charge sent from these two faces is at least $\frac{1}{2}$. The two faces $\beta \sb {3}, \beta \sb {4}$ similarly send to $\alpha $ the charge at least $\frac{1}{2}$. Hence $c\sp {*}(\alpha ) \ge c(\alpha ) + ( \frac{1}{2} \times 2) + (\frac{1}{2}  \times 2 )= 0$.

[e] We assume that three vertices of $x\sb {1}, x\sb {2}, x\sb {3}, x\sb {4}$ are of degree $\ge 4$. Let deg$(x\sb {1} )\ge 4$, deg$(x\sb {2} )\ge 4$, deg$(x\sb {3} )\ge 4$, and deg$(x\sb {4} )= 3$. Then [1] both of $\beta \sb {3}, \beta \sb {4}$ have degree $\ge 9$, or [2] one of them has degree $\ge 2519$. Therefore $c\sp {*}(\alpha ) \ge c(\alpha ) + ( \frac{1}{2} \times 3) + \frac{1}{2} = 0$.

[f] We finally assume that all of $x\sb {1}, x\sb {2}, x\sb {3}, x\sb {4}$ are of degree $\ge 4$. Then $c\sp {*}(\alpha ) \ge c(\alpha ) +  \frac{1}{2} \times 4 = 0$.\\

{\it 2-3}: $k=5$. Let $x\sb {1},\ldots , x\sb {5}$ be the vertices of $\alpha $, and $\beta \sb {1},\ldots ,\beta \sb {5}$ the faces, similarly as in the previous cases.

[a] We assume that all of $x\sb {1},\ldots , x\sb {5}$ are of degree $3$. Then at most two of $\beta \sb {1},\ldots ,\beta \sb {5}$ are $5$-, $6$-faces. If two of them, say $\beta \sb {1}, \beta \sb {3}$, are $5$-, $6$-faces, then the other three faces have degree $\ge 2519$, and then $c\sp {*}(\alpha ) \ge c(\alpha ) +  \frac{2}{5} \times 3 \ge 0$. If exactly one of them, say $\beta \sb {1}$, is $5$-, $6$-faces, then its neighboring faces $\beta \sb {2}$, $\beta \sb {5}$ are of degree $\ge 2519$ and the other two faces $\beta \sb {3}$, $\beta \sb {4}$ are of degree $\ge 7$, and then $c\sp {*}(\alpha ) \ge c(\alpha ) +  \frac{2}{5} \times 2 + \frac{1}{5} \times 2 \ge 0$. Hence all five faces are of degree $\ge 7$, and then $c\sp {*}(\alpha ) \ge c(\alpha ) +  \frac{1}{5} \times 5 = 0$.

[b] We assume that one of the five vertices, say $x\sb {1}$, has degree $\ge 4$. If deg$(\beta \sb {3})=5,6$, then its neighboring faces $\beta \sb {2}$, $\beta \sb {4}$ have degree $\ge 2519$, and then $c\sp {*}(\alpha ) \ge c(\alpha ) + \frac{1}{5} + \frac{2}{5} \times 2 = 0$. Thus deg$(\beta \sb {3})\ge 7$. One of the followings holds; [1] both of $\beta \sb {4}, \beta \sb {5}$ have degree $\ge 7$; or [2] one of $\beta \sb {4}, \beta \sb {5}$ has degree $\ge 2519$. The sum of the charge sent from these two faces is at least $\frac{1}{5}$ in either case. If deg$(\beta \sb {2})=5,6$, then $\beta \sb {1}$ and $\beta \sb {3}$ have degree $\ge 2519$, and then $c\sp {*}(\alpha ) \ge c(\alpha ) +  \frac{1}{5} + (\frac{1}{5} + \frac{2}{5} + \frac{1}{5} ) = 0$. Thus deg$(\beta \sb {2})\ge 7$. If deg$(\beta \sb {1})=5,6$, then $\beta \sb {2}$ has degree $\ge 2519$, and then $c\sp {*}(\alpha ) \ge c(\alpha ) + \frac{1}{5} + (\frac{2}{5} + \frac{1}{5} + \frac{1}{5} ) = 0$. Thus deg$(\beta \sb {1})\ge 7$. We can similarly deduce that deg$(\beta \sb {4})\ge 7$, deg$(\beta \sb {5})\ge 7$, therefore $c\sp {*}(\alpha ) \ge c(\alpha ) + \frac{1}{5} + (\frac{1}{5} \times 3 + \frac{1}{10} \times 2 ) = 0$. 

[c] We assume that consecutive two vertices of $x\sb {1},\ldots , x\sb {5}$ are of degree $\ge 4$. Let deg$(x\sb {1} )\ge 4$, deg$(x\sb {2} )\ge 4$, and deg$(x\sb {3} )=$ deg$(x\sb {4} )=$ deg$(x\sb {5} )= 3$. If deg$(\beta \sb {3})=5,6$, then $\beta \sb {2} $ and $\beta \sb {4} $ are of degree $\ge 2519$, and then $c\sp {*}(\alpha ) \ge 0$. Therefore deg$(\beta \sb {3} )\ge 7$, and similarly deg$(\beta \sb {4} )\ge 7$. If deg$(\beta \sb {2})=5,6$, then $\beta \sb {3} $ is of degree $\ge 2519$, and then $c\sp {*}(\alpha ) \ge 0$. Therefore deg$(\beta \sb {2})\ge 7$ and deg$(\beta \sb {5})\ge 7$ are similarly deduced. And then $c\sp {*}(\alpha ) \ge c(\alpha ) + \frac{1}{5} \times 2 + (\frac{1}{5} \times 2 + \frac{1}{10} \times 2 ) = 0$. The other case that non-consecutive two vertices of $x\sb {1},\ldots , x\sb {5}$ are of degree $\ge 4$ is similar.

[d] We can treat the remaining cases that three, four or five vertices of $x\sb {1},\ldots , x\sb {5}$ are of degree $\ge 4$ as well as above cases, and can deduce that $c\sp {*}(\alpha ) \ge 0$.\\

{\it 2-4}: $k=6$. Then $c\sp {*}(\alpha )= c(\alpha ) = 0$.\\

{\it 2-5}: $k=7,8$. The transfer from $\alpha $ is possible along $(3,k)$-, $(4,k)$-, $(5,k)$-edges which are weak or semi-weak. Since there are no consecutive two such edges which are weak, and since there are no consecutive three such edges which are semi-weak, $c\sp {*}(\alpha ) \ge c(\alpha ) - \frac{1}{5} \times 3= \frac{2}{5}$ if $k=7$, and $c\sp {*}(\alpha ) \ge c(\alpha ) - \frac{1}{5} \times 4= \frac{6}{5}$ if $k=8$.\\

{\it 2-6}: $9\le k \le 12$. we can similarly deduce that $c\sp {*}(\alpha ) \ge c(\alpha ) -  \frac{1}{2} \times \lfloor \frac{k}{2} \rfloor $. This value is $1, \frac{3}{2}, \frac{5}{2}, 3$ for $k = 9,10,11,12$, respectively. \\

{\it 2-7}: $13\le k \le 2518$. Then $c\sp {*}(\alpha ) \ge c(\alpha ) -  1 \times \lfloor \frac{k}{2} \rfloor = (k-6)- \lfloor \frac{k}{2} \rfloor $, and this value is not less than $ \frac{1}{21}k $ when $13\le k \le 2518$.\\

{\it 2-8}: $k \ge 2519$. The transfer from $\alpha $ is possible along $(3,k)$-, $(4,k)$-, $(5,k)$-edges which are weak or semi-weak. We notice that there are no consecutive two (resp. consecutive three) $(3,k)$-edges which are weak (resp. semi-weak), and that there are no consecutive $(4,k)$-edges which are weak. Therefore the transfer from $\alpha $ is maximized when [1] every other edges on $\alpha $ are weak $(3,k)$-edges and $k$ is even; [2] except two consecutive $(3,k)$-edges which are incident with a $(3,3,4,k)$-vertex on $\alpha $, every other edges on $\alpha $ are weak $(3,k)$-edges and $k$ is odd. If $k$ is even, $c\sp {*}(\alpha ) \ge c(\alpha ) - \frac{19}{10} \times \frac{k}{2}= \frac{1}{20}k-6 \ge \frac{1}{21}k $ because $k \ge 2520$. If $k$ is odd, $c\sp {*}(\alpha ) \ge c(\alpha ) - ( \frac{19}{10} \times \frac{k-3}{2} + 2 + \frac{1}{2})  = \frac{1}{20}k-6 + \frac{7}{20} > \frac{1}{21}k $ because $k \ge 2519$.
\end{case}
As a consequence, we establish Lemma \ref{th:b}.
\end{proof}
\ \\
\textbf{Rule B}.\\
Each major faces $\alpha $ sends its charge $c\sp {*}(\alpha )$ equally to its incident vertices.\\

We notice that each major face sends at least $\frac{1}{21}$ charge to its incident vertices because $\min \{ \frac{2}{5} \times \frac{1}{7}; \frac{6}{5}\times \frac{1}{8}; 1\times \frac{1}{9}; \frac{3}{2}\times \frac{1}{10}; \frac{5}{2}\times \frac{1}{11}; 3\times \frac{1}{12}; \frac{1}{21} k \times \frac{1}{k} \} = \frac{1}{21} $.

\begin{lemma}
After applying Rule \textbf{B}, we have the new charge $c\sp {**}(v)$:
\begin{eqnarray*}
c\sp {**}(v) \ge \frac{1}{21} \text{ for all vertices } v\in V(G).
\end{eqnarray*}
\label{th:c}
\end{lemma}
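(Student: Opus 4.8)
The plan is to combine the two facts already established with the hypothesis that $G$ is a counterexample. By Lemma \ref{th:b} we have $c^{*}(v)\ge 0$ for every vertex $v$, and, as observed just above, under Rule \textbf{B} every major face hands at least $\frac{1}{21}$ to each of its incident vertices. Since Rule \textbf{B} only pushes charge from major faces onto their incident vertices, it never decreases the charge of a vertex; thus $c^{**}(v)\ge c^{*}(v)$ always, and $c^{**}(v)\ge c^{*}(v)+\frac{1}{21}\ge\frac{1}{21}$ as soon as $v$ is incident with at least one major face. So the lemma reduces to a single claim: \emph{if $v$ is incident with no major face, then already $c^{*}(v)\ge\frac{1}{21}$.}

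To prove this, assume every face incident with $v$ has degree $\le 6$, so the cyclic list of face-degrees around $v$ uses only the values $3,4,5,6$; being a counterexample, $G$ forbids this list from appearing in Table \ref{tab:1}. First I would dispose of $\deg(v)=3$: the rows of Table \ref{tab:1} with $n=3$, namely $(3,3,\forall)$, $(3,4,\forall)$, $(3,5,\forall)$, $(3,6,\le 2518)$, $(4,4,\forall)$, $(4,5,\le 2518)$, $(4,6,\le 2518)$, $(5,5,\le 2518)$, $(5,6,\le 2518)$ and $(6,6,6)$, together account for \emph{every} triple whose three entries lie in $\{3,4,5,6\}$, so a $3$-vertex incident with no major face would be light --- a contradiction. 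Next, for $\deg(v)\ge 6$ the bound is immediate: the treatment of $6$-vertices and of $k$-vertices with $k\ge 7$ in the proof of Lemma \ref{th:b} already yields $c^{*}(v)\ge\frac{1}{2}$ for any such vertex, whatever its incident faces.

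That leaves $\deg(v)\in\{4,5\}$ with all incident faces of degree $\le 6$, which is where the (modest) work lies; it is finite bookkeeping rather than a real obstacle. Non-lightness excludes precisely the configurations around which $v$ would shed the most charge: for $\deg(v)=4$ the lists $(3,3,\ast,\ast)$, $(3,4,4,\ast)$ and $(4,4,4,4)$ are barred, and for $\deg(v)=5$ the lists $(3,3,3,3,\ast)$ and $(3,3,3,4,4)$ are barred. Only finitely many multisets of incident face-degrees then survive, and for each I would simply total the charge sent out of $v$ by Rule \textbf{A$1$} together with Rule \textbf{A$2$} (the latter applying only when $v$ has a $3$-face and at least two $6$-faces); Rules \textbf{A$3$} and \textbf{A$4$} move no charge out of such a $v$. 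The extremal surviving configuration is the $(3,4,5,5)$-vertex when $\deg(v)=4$, with $c^{*}(v)=2-\left(1+\frac{1}{2}+\frac{1}{5}+\frac{1}{5}\right)=\frac{1}{10}$, and the $(3,3,3,4,5)$-vertex when $\deg(v)=5$, with $c^{*}(v)=4-\left(1+1+1+\frac{1}{2}+\frac{1}{5}\right)=\frac{3}{10}$; in every surviving case $c^{*}(v)\ge\frac{1}{10}>\frac{1}{21}$. Together with the major-face case this gives $c^{**}(v)\ge\frac{1}{21}$ for every vertex $v$, which is the lemma. The only point needing care is to verify that Table \ref{tab:1} genuinely exhausts the triples with entries in $\{3,4,5,6\}$ (used for $\deg(v)=3$) and to run the short finite enumeration for $\deg(v)\in\{4,5\}$ without overlooking a configuration.
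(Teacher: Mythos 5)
Your proposal is correct and takes essentially the same route as the paper: both reduce the lemma to the observation that any vertex touching a major face gains at least $\frac{1}{21}$ under Rule \textbf{B} (with $c^{*}(v)\ge 0$ from Lemma \ref{th:b}), and then verify by the same finite bookkeeping that a non-light vertex all of whose incident faces have degree $\le 6$ retains at least $\frac{1}{10}$ after Rules \textbf{A$1$}--\textbf{A$4$}, with the same extremal configurations (e.g. $(3,4,5,5)$ and $(3,3,3,4,5)$) appearing in the paper's case analysis. The only difference is organizational (you split first on the presence of a major face, the paper enumerates vertex types by degree), so no further comment is needed.
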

\begin{proof}
Let $v$ be a $k$-vertex, $k \ge 3$.

We first assume that $k=3$. Since $G$ is a counterexample, this vertex $v$ corresponds to one of the following types: $(3,i, \ge 2519)$; $i= 6, \ldots , 12$, $(3,\ge 13, \ge 13)$, $(4,j, \ge 2519)$; $j =5, \ldots , 8$, $(4,\ge 9, \ge 9)$, $(5,5, \ge 2519)$, $(5,6, \ge 2519)$, $(5,\ge 7, \ge 7)$, $(\ge 6,\ge 6, \ge 7)$. In each case, $v$ is incident with at least one major face, therefore $c\sp {**}(v) \ge \frac{1}{21} $.\\

We next assume that $k=4$. This vertex is one of the following types: $(3,3,4, \ge 2519)$; $(3,3,5, \ge 2519)$; $(3,3,\ge 6, \ge 7)$; $(3,4,4, \ge 7)$; $(3,\ge 4, \ge 5, \ge 5)$; $(\ge 4,\ge 4,\ge 4,\ge 5)$. If $v$ has the type $(3,i,6, 6)$ as in Rule \textbf{A$2$}, then $c\sp {**}(v) \ge c(v) -(1+\frac{1}{10} +\frac{1}{2} ) = \frac{2}{5} \ge \frac{1}{21}$. If $v$ has the type $(3,\ge 4, \ge 5, \ge 5)$, then the charge of $v$ is still remained, i.e., $c\sp {**}(v) \ge c\sp {*}(v) \ge c(v) -(1+\frac{1}{2}+\frac{1}{5} +\frac{1}{5}  ) = \frac{1}{10} \ge \frac{1}{21}$. If $v$ has the type $(\ge 4,\ge 4,\ge 4,\ge 5)$, then similarly $c\sp {**}(v) \ge c(v) -(\frac{1}{2}+\frac{1}{2}+\frac{1}{2} +\frac{1}{5}  ) = \frac{3}{10} \ge \frac{1}{21}$. For the other cases, $v$ is incident with at least one major face, and $c\sp {**}(v) \ge \frac{1}{21} $.\\

We set $k=5$. This vertex is one of the following types: $(3,3,3,3, \ge 7)$, $(3,3,3,\ge 4,\ge 5)$, $(\ge 3,\ge 3,\ge 4,\ge 4,\ge 4)$. If $v$ is a $(3,3,3,3, \ge 7)$-vertex, then $c\sp {**}(v) \ge c\sp {*}(v) + \frac{1}{21} \times 1 =  \frac{1}{21} $. If $v$ is a $(3,3,3,6,6)$-vertex, then $c\sp {**}(v) = c(v) -(1+ \frac{1}{10})\times 3 =4- \frac{33}{10} \ge \frac{1}{21} $. For the other cases, we observe that the charge of $v$ is still remained, and $c\sp {**}(v) \ge \frac{1}{21}$.\\

We set $k\ge 6$. This vertex $v$ has the type $(\ge 3,\ge 3,\ge 3,\ge 3,\ge 3,\ge 4)$ if $k=6$. In any case, the charge of $v$ is still remained after applying Rule \textbf{A$1-4$}, and we can deduce that $c\sp {**}(v) \ge \frac{1}{21}$.
\end{proof}
Euler's formula together with Lemma \ref{th:c} and the hypothesis $n > 126| \chi (\M) |$ yields
\begin{eqnarray*}
6|\chi (\M)| =\sum_{v\in V(G)}^{}c(v) + \sum_{\alpha \in F(G)}^{}c(\alpha ) =\sum_{v\in V(G)}^{}c\sp {**}(v) + \sum_{\alpha \in F(G)}^{}c\sp {**}(\alpha ) \\
 \ge \sum_{v\in V(G)}^{}c\sp {**}(v) \ge \frac{1}{21}n > 6|\chi (\M)|,
\end{eqnarray*}
a contradiction. This completes the proof of Theorem $2$.
\\
\\
\\
\section{Path Transferability of Graphs on Surfaces}
In this section we treat path transferability of graphs on general surfaces. We first prepare several notations: A {\it path} consists of distinct vertices $v\sb {0},v\sb {1}, \ldots ,v\sb {n}$ and edges $v\sb {0}v\sb {1}, v\sb {1}v\sb {2},  \ldots , v\sb {n-1}v\sb {n}$. Through this paper we assume that each path has a direction. The reverse path of $P$ is denoted by $P^{-1}$. The number of edges in a path $P$ is called its {\it length}, and a path of length $n$ is called an {\it $n$-path}. The last(resp. first) vertex of a path $P$ in its direction is called the {\it head} (resp. {\it tail} ) of $P$ and is denoted by $h(P)$(resp. $t(P)$); for $P= \langle v\sb {0}v\sb {1}\cdots v\sb {n-1}v\sb {n}\rangle$, we set $h(P)=v\sb {n}$ and $t(P)=v\sb {0}$. The set of all inner vertices of $P$, the vertices that are neither the head nor the tail, is denoted by $Inn(P)$.

We are interested in the movement of a path along a graph, which seems as a ``train'' moving on the graph: Let $P$ be an $n$-path. If $h(P)$ has a neighboring vertex $v\notin Inn(P)$, then we have a new $n$-path $P\sp {\prime}$ by removing the vertex $t(P)$ from $P$ and adding $v$ to $P$ as its new head. We say that $P$ take a {\it step} to $v$, and denote it by $P\xrightarrow[]{v} P\sp {\prime}$ (or briefly $P\xrightarrow[]{} P\sp {\prime}$). If there is a sequence of $n$-paths $P\xrightarrow[]{} \cdots \xrightarrow[]{} Q$, then we say that {\it $P$ can transfer} ({\it or move}) {\it to $Q$}, and denote it by $P\dashrightarrow Q$.
 A graph $G$ is called {\it $n$-path-transferable} or {\it $n$-transferable} if $G$ has at least one $n$-path and if $P \dashrightarrow Q$ for any pair of directed $n$-paths $P, Q$ in $G$. The maximum number $n$ for which $G$ is $n$-path-transferable is called the {\it path transferability} of $G$. The following result says that for a fixed surface the number of polyhedral maps whose path transferability are more than $12$ is finite:\\
\\
\textbf{Main Theorem.}
If a polyhedral map $G$ on a surface $\M$ of Euler characteristic $\chi (\M) \le 0$ has more than $126|\chi (\M)|$ vertices, then path transferability of $G$ is at most $12$.
\begin{proof}
Let $G$ be a graph as above. By Theorem $2$, $G$ contains one of the light vertices in Table \ref{tab:1}. We assume that such a vertex has the type $(3,12,\le 2518)$. Then we can find a path of length $13$ which cannot move any longer (see Fig.$1$), therefore path transferability of $G$ is at most $12$. For the other types, we can similarly find clogged paths of length $\le 12$ around the light vertices. Hence path transferability of $G$ is at most $12$.
\end{proof}

\begin{figure}[htbp]
\begin{center}
\includegraphics[width=0.40\linewidth]{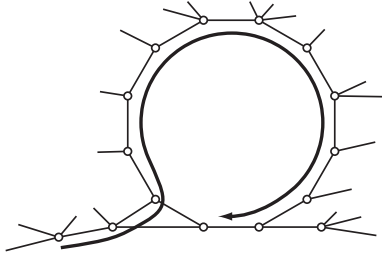}
\end{center}
\caption{A path clogged around a light vertex $(3,12,\le 2518)$.}
\label{fig:1}
\end{figure}

Let $G$ be a polyhedral map on a surface whose faces are of degree $6$, and $G^ {\Delta}$ the truncated graph of $G$. This graph $G^ {\Delta}$ has path transferability $12$, thus the value $12$ in this theorem is best possible (see Fig.$2$).

\begin{figure}[htbp]
\begin{center}
\includegraphics[width=0.40\linewidth]{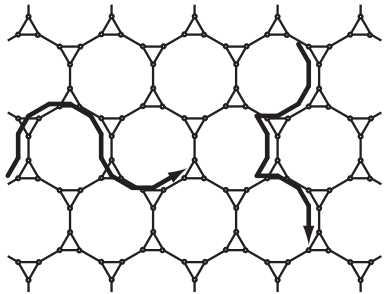}
\end{center}
\caption{Paths of length twelve can move from one to another in this graph.}
\label{fig:2}
\end{figure}

\section{Combinatorial Curvature}
\begin{figure}[htbp]
\begin{center}
\psfrag{a}{$(3,3, \forall )$}
\psfrag{b}{$(3,4, \forall )$}
\psfrag{c}{$(3,5, \forall )$}
\psfrag{d}{$(4,4, \forall )$}
\psfrag{e}{$(6,6,6)$}
\psfrag{f}{$(3,3,3, \forall )$}
\psfrag{g}{$(3,3,6,6)$}
\psfrag{h}{$(3,4,4,6)$}
\psfrag{i}{$(4,4,4,4)$}
\psfrag{j}{$(3,3,3,3,6)$}
\psfrag{k}{$(3,3,3,4,4)$}
\psfrag{l}{$(3,3,3,3,3,3)$}
\includegraphics[width=0.95\linewidth]{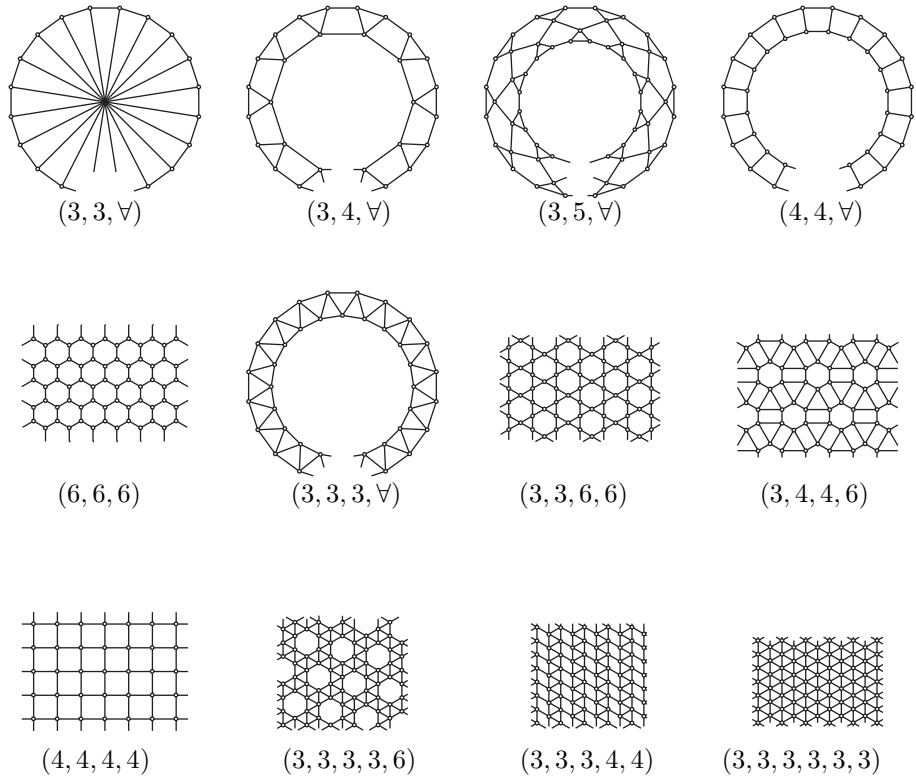}
\end{center}
\caption{Light vertices which cannot remove from Table \ref{tab:1}.}
\label{fig:3}
\end{figure}
By considering the graphs in Fig.$3$, we can see that the several types in Table \ref{tab:1} is in some sense tight; we cannot remove the types $(a\sb {1}, a\sb {2}, \ldots , a\sb {n})$ with $\dagger $ mark from the list. On the other hand, Higuchi \cite {Hi} studied the {\it combinatorial curvature}, introduced by Gromov \cite {Gr}, that is defined as 
\begin{eqnarray*}
\Phi (v)=1 - \frac{\text{deg}(v)}{2} + \sum_{\alpha \in F(v)}^{} \frac{1}{\text{deg}(\alpha )} ,
\end{eqnarray*}
where $F(v)$ is the set of faces incident with a vertex $v$.
Higuchi conjectured the following:
\begin{conjecture}[Higuchi]
Let $G$ be a finite or infinite planar graph. If $\Phi (v) >0 $ for all $v \in V(G)$, then $G$ has finite number of vertices.
\end{conjecture}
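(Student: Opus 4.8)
To prove this conjecture the plan is to establish the stronger quantitative bound that $|V(G)|\le N_0$ for an absolute constant $N_0$; finiteness is then immediate. The argument rests on three structural facts — (a) every vertex of $G$ has degree at most $5$; (b) there is an absolute constant $D_0$ such that every face of $G$ has degree at most $D_0$; (c) consequently there is an absolute $\epsilon_0>0$ with $\Phi(v)\ge\epsilon_0$ for every vertex $v$ — combined with a Gauss--Bonnet estimate $\sum_{v\in V(G)}\Phi(v)\le 2$. Facts (a) and (c) and the Gauss--Bonnet estimate are short; fact (b) is the heart of the matter.

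Fact (a) is immediate: since every face has degree at least $3$, we have $\sum_{\alpha\in F(v)}\frac{1}{\deg(\alpha)}\le\frac{\deg(v)}{3}$, hence $\Phi(v)\le 1-\frac{\deg(v)}{6}$, so $\Phi(v)>0$ forces $\deg(v)\le 5$ and in particular $G$ is locally finite. For fact (c), granting (b): a vertex of degree $3$, $4$ or $5$ has $\Phi(v)=1-\frac{\deg(v)}{2}+\sum_i\frac{1}{a_i}$ with each face degree $a_i\in\{3,\dots,D_0\}$, a rational number whose denominator divides $2\,\mathrm{lcm}(3,\dots,D_0)$; hence $\Phi(v)>0$ implies $\Phi(v)\ge\frac{1}{2\,\mathrm{lcm}(3,\dots,D_0)}$. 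Vertices of degree $\le 2$ always satisfy $\Phi(v)>0$, and in fact $\Phi(v)\ge\frac{1}{D_0}$; taking $\epsilon_0$ to be the smaller of the two bounds gives (c).

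For the Gauss--Bonnet estimate, embed $G$ in $S^2$ (if finite) or, properly, in the plane (if infinite); since by (b) all face degrees are finite, every face is a bounded polygon, so I can metrize the surface by realizing each face of degree $d$ as a regular Euclidean $d$-gon with edges of unit length and gluing along the edges of $G$. The result is a Euclidean polyhedral surface $X$ which is flat away from the vertices of $G$ and has at each vertex $v$ a cone point of total angle $2\pi\bigl(1-\Phi(v)\bigr)<2\pi$, i.e.\ of concentrated curvature $2\pi\Phi(v)>0$; by (a) and (b) the metric is complete. If $G$ is finite then $X$ is a metric $2$-sphere and the Gauss--Bonnet theorem gives $\sum_v 2\pi\Phi(v)=2\pi\chi(S^2)$, that is $\sum_v\Phi(v)=2$; if $G$ is infinite then $X$ is a complete open surface homeomorphic to the plane and Cohn--Vossen's inequality gives $\sum_v 2\pi\Phi(v)\le 2\pi\chi(X)\le 2\pi$. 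In either case $\sum_v\Phi(v)\le 2$, so by (c) we get $|V(G)|\le 2/\epsilon_0$, and $G$ is finite.

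It remains to prove (b), and this is the main obstacle. The plan is to suppose that $\alpha$ is a face of large degree $N$ and to walk around its boundary. A boundary vertex of degree $5$ would satisfy $\Phi(v)\le-\frac{3}{2}+\frac{1}{N}+\frac{4}{3}<0$ once $N\ge 7$, which is impossible; a boundary vertex of degree $4$ is forced to have all three of its faces other than $\alpha$ triangular; and a boundary vertex of degree $3$ is forced to have its two faces other than $\alpha$ of degree pair in $\{(3,3),(3,4),(3,5),(3,6),(4,4)\}$. Hence $\alpha$ is surrounded by a ring of faces of degree at most $6$ meeting at heavily constrained vertices. One then has to propagate these local restrictions outward through successive concentric rings around $\alpha$: on the sphere this cannot continue indefinitely because there are only finitely many faces, and a careful Gauss--Bonnet bookkeeping on the growing patch (as in the estimate above) rules out the infinite case as well, so the propagation must terminate at a vertex with $\Phi(v)\le 0$ — a contradiction once $N$ exceeds an explicit $D_0$. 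Carrying out this propagation is a finite but delicate analysis of the possible local configurations, of exactly the type the discharging method used to prove Theorem~$2$ is built to handle; it is where the constant $D_0$, and hence $N_0=2/\epsilon_0$, is produced.
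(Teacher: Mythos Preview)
The paper does not prove Higuchi's conjecture; it only states it and records that it was settled by DeVos and Mohar via a Gauss--Bonnet inequality. So there is no ``paper's own proof'' to compare against, and your proposal must be judged on its own.

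Your strategy breaks at fact~(b), and with it the ``stronger quantitative bound'' $|V(G)|\le N_0$ you set out to prove is simply false. The $n$-prism (two $n$-gons joined by a ring of $n$ squares) is a simple $3$-connected planar graph in which every vertex is a $(4,4,n)$-vertex with
\[
\Phi(v)=1-\tfrac32+\tfrac14+\tfrac14+\tfrac1n=\tfrac1n>0;
\]
likewise, in the $n$-antiprism every vertex has type $(3,3,3,n)$ and $\Phi(v)=\tfrac1n>0$. For every $n\ge3$ these graphs satisfy your hypothesis, yet they have $2n$ vertices and two faces of degree $n$. Hence there is no absolute $D_0$ bounding face degrees and no absolute $N_0$ bounding $|V(G)|$; consequently (c) fails as well, since $\inf_v\Phi(v)=\tfrac1n\to0$. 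In the prism your ``propagation through concentric rings'' terminates after a single ring of squares, which closes up on the second $n$-gon without ever producing a vertex of non-positive curvature --- exactly one of the configurations your own local analysis allows at a boundary vertex of degree~$3$ --- so the contradiction you hope for never materialises.

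The actual resolution by DeVos and Mohar does use a Gauss--Bonnet-type inequality, so that part of your intuition is sound; but it does not proceed by uniformly bounding face degrees. Rather, prisms and antiprisms emerge as the only unbounded families with everywhere positive curvature, and outside these two families one obtains a fixed numerical bound on $|V(G)|$. Any repair of your argument must isolate and handle these families separately.
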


This conjecture was partly confirmed by Higuchi himself \cite {Hi} for some special cases, and by Sun and Yu \cite {SuYu} for the case of $3$-regular graphs. The conjecture is fully solved by Devos and Mohar \cite {DeMo} by establishing a Gauss-Bonnet inequality on polygonal surface. B.~Chen and G.~Chen \cite {ChCh} further investigated this study.

We will expect the following for a polyhedral map on a fix surface:
\begin{conjecture}
Let $G$ be a simple polyhedral map on a surface $\M$ of Euler characteristic $\chi (\M) \le 0$. There exists a constant number $c\sb{\M}$ for each $\M$ such that $G$ contains a vertex $v$ with $\Phi (v) \ge 0 $ if $ |V(G)| > c\sb{\M}|\chi (\M)|$.
\end{conjecture}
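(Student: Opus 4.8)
\smallskip
\noindent\textbf{Remarks towards Conjecture $2$.} The plan is to deduce it --- with no further discharging --- from two elementary facts: a combinatorial Gauss--Bonnet identity and a ``curvature gap'' lemma. This yields the conjecture with a constant $c\sb{\M}$ that may even be taken independent of $\M$.

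The first fact is the identity
\begin{eqnarray*}
\sum_{v\in V(G)}^{}\Phi (v)=\chi (\M),
\end{eqnarray*}
valid for any closed $2$-cell embedding, hence for any polyhedral map $G$ on $\M$. It follows from Euler's formula $|V(G)|-|E(G)|+|F(G)|=\chi (\M)$ by summing the definition of $\Phi $ over $v$: the part $\sum_{v}1$ gives $|V(G)|$, the part $-\frac12\sum_{v}\deg (v)$ gives $-|E(G)|$, and the part $\sum_{v}\sum_{\alpha \in F(v)}\frac{1}{\deg (\alpha )}$ equals $\sum_{\alpha \in F(G)}\deg (\alpha )\cdot\frac{1}{\deg (\alpha )}=|F(G)|$ since each face of a closed $2$-cell embedding is incident with exactly $\deg (\alpha )$ vertices. (This is the finite, exact version of the Gauss--Bonnet inequality of \cite{DeMo}.)

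The second fact is a gap lemma: there is an absolute constant $\varepsilon \sb{0}>0$ such that every vertex $v$ of a simple polyhedral map has $\Phi (v)\ge 0$ or $\Phi (v)\le -\varepsilon \sb{0}$, and one may take $\varepsilon \sb{0}=\frac{1}{1806}$. To prove it, set $n=\deg (v)$ and let $a\sb{1},\dots ,a\sb{n}\ge 3$ be the degrees of the faces incident with $v$, so $\Phi (v)=1-\frac n2+\sum_{i=1}^{n}\frac{1}{a\sb{i}}$. If $n\ge 7$ then $\Phi (v)\le 1-\frac n2+\frac n3=1-\frac n6\le -\frac16$. If $n\in\{3,4,5,6\}$ one must show that $\frac n2-1-\sum_{i}\frac{1}{a\sb{i}}$, whenever it is positive, is at least $\varepsilon \sb{0}$; a crude estimate shows that any face-degree pattern making this quantity smaller than $\frac16$ has all $a\sb{i}$ below an explicit bound, so only finitely many patterns remain --- these are precisely the ``near-critical'' ones already isolated in Table \ref{tab:1}, namely the non-$\dagger $ entries $(3,7,\cdot ),\dots ,(3,12,\cdot )$, $(4,5,\cdot ),\dots ,(5,6,\cdot )$, $(3,3,4,\cdot )$, $(3,3,5,\cdot )$. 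A finite computation --- in which Sylvester's greedy expansion $\frac12=\frac13+\frac17+\frac1{43}+\cdots $ turns out to be extremal --- shows the least positive value is $\frac12-\bigl(\frac13+\frac17+\frac1{43}\bigr)=\frac{1}{1806}$, attained at a $3$-vertex incident to faces of degrees $3,7,43$; every $\dagger $-pattern of Table \ref{tab:1}, by contrast, already satisfies $\Phi (v)\ge 0$.

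Given both facts the conclusion is immediate: if $G$ has no vertex with $\Phi (v)\ge 0$ then $\Phi (v)\le -\varepsilon \sb{0}$ for all $v$, hence $-|\chi (\M)|=\chi (\M)=\sum_{v}\Phi (v)\le -\varepsilon \sb{0}|V(G)|$, i.e. $|V(G)|\le \varepsilon \sb{0}^{-1}|\chi (\M)|$; taking $c\sb{\M}=\varepsilon \sb{0}^{-1}$ (for instance $1806$, the same for every $\M$) proves the conjecture. I expect the only genuine work to be the finite case-check in the gap lemma, where one must be sure no near-critical pattern is overlooked; here Theorem $2$ and Table \ref{tab:1} are the right bookkeeping device, since their non-$\dagger $ rows list exactly the patterns for which ``gap'' rather than ``non-negativity'' is the operative alternative. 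Two caveats: this $c\sb{\M}$ is surely far from optimal, and bringing it down toward the $126$ of Theorem $2$ would instead require sharpening the discharging so that ``$\le 2518$'' in Table \ref{tab:1} is replaced by the exact thresholds at which $\Phi (v)$ turns negative (such as $(3,7,\le 42)$ or $(3,3,4,\le 12)$); and ``$\Phi (v)\ge 0$'' cannot be strengthened to ``$\Phi (v)>0$'', since the hexagonal tiling of the torus is a polyhedral map with arbitrarily many vertices, all of curvature $0$.
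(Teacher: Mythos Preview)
The paper offers no proof of Conjecture~2; it is posed there as an open problem suggested by Theorem~2 and by the combinatorial-curvature viewpoint. Your outline, by contrast, actually settles it. Both ingredients are sound: the Gauss--Bonnet identity $\sum_{v}\Phi(v)=\chi(\M)$ holds exactly for any polyhedral map (Proposition~\ref{th:a} guarantees that each face $\alpha$ is incident with precisely $\deg(\alpha)$ distinct vertices, which is what the double count needs), and the curvature-gap statement with $\varepsilon_{0}=\frac{1}{1806}$ is a known Egyptian-fraction fact. The averaging step then gives the conjecture with $c_{\M}=1806$, uniform in $\M$; the case $\chi(\M)=0$ comes for free, since $\sum_{v}\Phi(v)=0$ already forces some $\Phi(v)\ge 0$. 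Your closing remark that one cannot replace $\Phi(v)\ge 0$ by $\Phi(v)>0$ (witness the hexagonal torus) is also correct and worth keeping.

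One point in the gap-lemma sketch is misstated and should be repaired. It is \emph{not} true that every pattern with $0<\frac{n}{2}-1-\sum_{i}\frac{1}{a_{i}}<\frac{1}{6}$ has all $a_{i}$ bounded: for any $a_{3}\ge 43$ the $3$-vertex type $(3,7,a_{3})$ satisfies $-\Phi(v)=\frac{1}{42}-\frac{1}{a_{3}}\in\bigl[\frac{1}{1806},\frac{1}{42}\bigr)$, yet $a_{3}$ is unbounded. The correct reduction is that, ordering $a_{1}\le\cdots\le a_{n}$, the first $n-1$ entries are forced into a finite list (by the greedy bound $\frac{n-j+1}{a_{j}}\ge \frac{n}{2}-1-\sum_{i<j}\frac{1}{a_{i}}$), after which either $a_{n}$ is itself bounded or $\Phi(v)$ decreases monotonically to the limit $1-\frac{n}{2}+\sum_{i<n}\frac{1}{a_{i}}$ as $a_{n}\to\infty$; in the latter case the maximum negative value is attained at the least admissible $a_{n}$. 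Running this finite check does produce $(3,7,43)$ as the extremal configuration and $\varepsilon_{0}=\frac{1}{1806}$, so your conclusion stands once the argument is phrased this way. With that fix your remarks constitute a complete proof of the conjecture as stated --- something the paper itself does not supply --- albeit with a constant far from the $126$ that a sharpened discharging (bringing the non-$\dagger$ bounds in Table~\ref{tab:1} down to the thresholds listed after the conjecture) might eventually yield.
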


This means that there exists a vertex whose surrounding area looks convex or flat if a polyhedral map has sufficiently large number of vertices. From the aspect of the combinatorial curvature, the upper bound $2518$ in Table \ref{tab:1} will be expected to improve as $(3,7,\le 42)$, $(3,8,\le 24)$, $(3,9,\le 18 )$, $(3,10,\le 15)$, $(3,11,\le 13)$, $(3,12,12)$, $(4,5,\le 20)$, $(4,6,\le 12)$, $(4,7,\le 9)$, $(4,8,8)$, $(5,5,\le 10)$, $(5,6,\le 7 )$, $(3,3,4,\le 12)$, $(3,3,5,\le 7 )$, respectively, for some number $c\sb{\M}$.


\end{document}